\numberwithin{equation}{section}
\numberwithin{figure}{section}
\theoremstyle{plain}
\newtheorem{thm}{\protect\theoremname}
  \theoremstyle{plain}
  \newtheorem{prop}[thm]{\protect\propositionname}
  \theoremstyle{plain}
  \newtheorem{lem}[thm]{\protect\lemmaname}
  \providecommand{\lemmaname}{Lemma}
  \providecommand{\propositionname}{Proposition}
\providecommand{\theoremname}{Theorem}
\begin{document}

\title{The Median Largest Prime Factor}

\author{Eric Naslund}

\date{July 25th 2012}

\email{naslund.eric@gmail.com}

\address{The University of British Columbia Department of Mathematics}
\begin{abstract}
Let $M(x)$ denote the median largest prime factor of the integers
in the interval $[1,x]$. We prove that
\[
M(x)=x^{\frac{1}{\sqrt{e}}\exp\left(-\text{li}_{f}(x)/x\right)}+O_{\epsilon}\left(x^{\frac{1}{\sqrt{e}}}e^{-c\left(\log x\right)^{3/5-\epsilon}}\right),
\]
where $\text{li}_{f}(x)=\int_{2}^{x}\frac{\left\{ x/t\right\} }{\log t}dt$.
From this, we obtain the asymptotic 
\[
M(x)=e^{\frac{\gamma-1}{\sqrt{e}}}x^{\frac{1}{\sqrt{e}}}\left(1+O\left(\frac{1}{\log x}\right)\right),
\]
where $\gamma$ is the Euler Mascheroni constant. This answers a question
posed by Martin \cite{MartinPrimeQuestion}, and improves a result
of Selfridge and Wunderlich \cite{WunderlichSelfridgeMedian}. 
\end{abstract}
\maketitle

\section{Introduction\label{sec:Introduction}}

The median largest prime factor of the integers in the interval $\left[1,x\right]$,
which we denote by $M(x)$, has size 
\begin{equation}
M(x)=x^{1/\sqrt{e}+o(1)}.\label{eq: Selfridge Wunderlich Result}
\end{equation}
This result first appeared in 1974 in a paper by Selfridge and Wunderlich
\cite{WunderlichSelfridgeMedian}. Martin asked \cite{MartinPrimeQuestion}
how the median prime factor compares to $x^{1/\sqrt{e}}$, and whether
we have 
\begin{enumerate}
\item For sufficiently large $x$, $M(x)<x^{1/\sqrt{e}}$.
\item Each inequality $M(x)<x^{1/\sqrt{e}}$ and $M(y)>y^{1/\sqrt{e}}$
holds for arbitrarily large $x,y$.
\item For sufficiently large $x$, $M(x)>x^{\frac{1}{\sqrt{e}}}$. 
\end{enumerate}
In this paper we prove that 
\begin{equation}
M(x)=e^{\frac{\gamma-1}{\sqrt{e}}}x^{\frac{1}{\sqrt{e}}}\left(1+O\left(\frac{1}{\log x}\right)\right),\label{eq:M(x) asymptotic}
\end{equation}
where $\gamma$ is the Euler-Mascheroni constant. As $\exp\left(\frac{\gamma-1}{\sqrt{e}}\right)\approx0.7738$,
it follows that option $1$ holds. Our main theorem is stronger than
this with an error term like that of the prime number theorem, from
which we may obtain an asymptotic expansion for $M(x)$.
\begin{thm}
\label{thm: Median prime factor precise}For every $\epsilon>0$,
\begin{equation}
M(x)=x^{\frac{1}{\sqrt{e}}\exp\left(-\text{li}_{f}(x)/x\right)}+O_{\epsilon}\left(x^{\frac{1}{\sqrt{e}}}e^{-c\left(\log x\right)^{3/5-\epsilon}}\right),\label{eq:M(x) full asymptotic}
\end{equation}
where $\text{li}_{f}(x)=\int_{2}^{x}\frac{\left\{ x/t\right\} }{\log t}dt$,
and $O_{\epsilon}$ means the constant is allowed to depend on $\epsilon$.
\end{thm}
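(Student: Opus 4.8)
The plan is to turn the median condition into a count of smooth numbers and solve the resulting equation for $M(x)$. For $n\ge 2$ let $P(n)$ be the largest prime factor of $n$, and for $z\ge 2$ set $\Psi(x,z)=\#\{n\le x:\ P(n)\le z\}$. The elementary starting point is that when $z\ge\sqrt x$, every $n\le x$ with $P(n)>z$ factors uniquely as $n=pm$ with $p>z$ prime and $m\le x/p<\sqrt x\le z$, forcing $P(m)<p$, and conversely every such pair occurs; hence
\[
\#\{n\le x:\ P(n)>z\}=\sum_{z<p\le x}\left\lfloor\frac{x}{p}\right\rfloor\qquad(z\ge\sqrt x).
\]
Feeding weak forms of Mertens' theorem and of $\pi(x)=o(x)$ into this identity gives $\sqrt x<M(x)<x^{9/10}$ for all large $x$ (indeed it recovers $M(x)=x^{1/\sqrt e+o(1)}$), so $\log M(x)\asymp\log x$. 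Put $y=M(x)$ and $u=\log x/\log y\in(1,2)$. Since consecutive values of $\Psi(x,\cdot)$ differ by at most $\lfloor x/y\rfloor<x^{1/2}$, the defining property of the median gives $\Psi(x,y)=\tfrac12 x+O(x^{1/2})$, i.e.
\[
\sum_{y<p\le x}\left\lfloor\frac{x}{p}\right\rfloor=\frac{x}{2}+O\!\left(x^{1/2}\right).
\]

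The heart of the argument is to evaluate this sum. Counting the pairs $(p,m)$ with $p>y$ prime and $pm\le x$ in the opposite order gives
\[
\sum_{y<p\le x}\left\lfloor\frac{x}{p}\right\rfloor=\sum_{1\le m<x/y}\bigl(\pi(x/m)-\pi(y)\bigr),
\]
into which I would substitute the prime number theorem with the Vinogradov--Korobov error, $\pi(t)=\text{li}(t)+R(t)$, $R(t)\ll t\,e^{-c(\log t)^{3/5-\epsilon}}$. Since $x/m\ge y>\sqrt x$ throughout the range, each $R$-value is smaller than $x$ by more than any fixed power; summing over the $\le x/y$ values of $m$ costs a factor $\ll x\log x$ and remains $O\!\bigl(x\,e^{-c(\log x)^{3/5-\epsilon}}\bigr)$. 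For the main term, interchanging the sum with the integral,
\[
\sum_{1\le m<x/y}\bigl(\text{li}(x/m)-\text{li}(y)\bigr)=\sum_{1\le m<x/y}\int_y^{x/m}\frac{dt}{\log t}=\int_y^x\frac{\lfloor x/t\rfloor}{\log t}\,dt,
\]
and writing $\lfloor x/t\rfloor=x/t-\{x/t\}$ splits this into $x\int_y^x\frac{dt}{t\log t}-\int_y^x\frac{\{x/t\}}{\log t}\,dt$. The first piece equals $x(\log\log x-\log\log y)=x\log u$ exactly; in the second I would push the lower limit from $y$ down to $2$, the discarded part being $\le\int_2^y\frac{dt}{\log t}\ll y\ll x^{9/10}$, so that $\int_y^x\frac{\{x/t\}}{\log t}\,dt=\text{li}_f(x)+O\!\bigl(x\,e^{-c(\log x)^{3/5-\epsilon}}\bigr)$. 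Assembling the pieces,
\[
\sum_{y<p\le x}\left\lfloor\frac{x}{p}\right\rfloor=x\log u-\text{li}_f(x)+O\!\left(x\,e^{-c(\log x)^{3/5-\epsilon}}\right).
\]

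Comparing the two evaluations of the sum and dividing by $x$ yields
\[
\log u=\frac12+\frac{\text{li}_f(x)}{x}+O\!\left(e^{-c(\log x)^{3/5-\epsilon}}\right).
\]
As the right-hand side is $O(1)$, exponentiating gives $u=\sqrt e\,e^{\text{li}_f(x)/x}\bigl(1+O(e^{-c(\log x)^{3/5-\epsilon}})\bigr)$, whence
\[
\log M(x)=\frac{\log x}{u}=\frac{\log x}{\sqrt e}\,e^{-\text{li}_f(x)/x}+O\!\left((\log x)\,e^{-c(\log x)^{3/5-\epsilon}}\right);
\]
exponentiating once more, using $\text{li}_f(x)\ge0$ (so that $x^{\frac1{\sqrt e}\exp(-\text{li}_f(x)/x)}\le x^{1/\sqrt e}$) and $e^{O(t)}=1+O(t)$ as $t\to 0$, gives
\[
M(x)=x^{\frac1{\sqrt e}\exp(-\text{li}_f(x)/x)}+O\!\left(x^{1/\sqrt e}(\log x)\,e^{-c(\log x)^{3/5-\epsilon}}\right),
\]
and absorbing the harmless $\log x$ into the exponent (at the cost of a smaller $c$, or a slightly larger $\epsilon$) is Theorem~\ref{thm: Median prime factor precise}.

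The only real work is the bookkeeping in the middle paragraph: verifying that every error term — the $R$-values summed over $m$, the tail of $\text{li}_f$ below $y$, the $O(x^{1/2})$ from the discreteness of the median, and the stray powers of $\log x$ — is genuinely dominated by $x\,e^{-c(\log x)^{3/5-\epsilon}}$. This works because $y$, hence every argument $x/m$ where the prime number theorem is applied, has size $x^{\Theta(1)}$, so one is deep inside the range where the error beats every fixed power of $x$; nothing beyond Vinogradov--Korobov is used. The one genuinely clean idea is the identity $\sum_{y<p\le x}\lfloor x/p\rfloor=\sum_{m<x/y}(\pi(x/m)-\pi(y))$, which reduces the problem to a single invocation of the prime number theorem and produces $\text{li}_f$ automatically out of $\lfloor x/t\rfloor=x/t-\{x/t\}$.
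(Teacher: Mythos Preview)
Your proof is correct and follows essentially the same route as the paper: the same starting identity $\sum_{y<p\le x}\lfloor x/p\rfloor=\sum_{m<x/y}\bigl(\pi(x/m)-\pi(y)\bigr)$ (the paper's equation~(2.3)), the same insertion of the prime number theorem with Vinogradov--Korobov error, the same split $\lfloor x/t\rfloor=x/t-\{x/t\}$ producing $\text{li}_f$, and the same final algebra. The only organizational difference is that the paper isolates $\sum_{p\le x}\{x/p\}=\text{li}_f(x)+O(\cdot)$ as a standalone Proposition~\ref{prop:frac(x/p) proposition} and uses Mertens' theorem separately for the $\sum 1/p$ part, whereas you work directly with $\sum\lfloor x/p\rfloor$ and extract $x\log u$ from $\int_y^x\frac{dt}{t\log t}$; this is a mild streamlining but not a different argument.
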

The function $\text{li}_{f}(x)$ admits an asymptotic expansion with
coefficients expressible as sums of the Stieltjes constants.
\begin{prop}
\label{prop: li_f expansion}For any integer $k$, we have the asymptotic
expansion 
\begin{equation}
\text{li}_{f}(x)=c_{0}\frac{x}{\log x}+c_{1}\frac{x}{\log^{2}x}+\cdots+c_{k-1}\frac{(k-1)!x}{\log^{k}x}+O\left(\frac{x}{\log^{k+1}x}\right),\label{eq:li f expansion}
\end{equation}
where $c_{n}=1-\sum_{k=0}^{n}\frac{1}{k!}\gamma_{k}$, and $\gamma_{k}$
denotes the $k^{th}$ Stieltjes.
\end{prop}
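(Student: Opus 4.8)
The plan is to localize the integral defining $\text{li}_{f}(x)$ to the range of $t$ close to $x$, where essentially all of its mass lies, and there expand the slowly varying factor $1/\log t$ into a series in $1/\log x$ whose coefficients turn out to be the $I_{j}:=\int_{1}^{\infty}\{u\}(\log u)^{j}u^{-2}\,du$. Concretely, for $x$ large I would split
\[
\text{li}_{f}(x)=\int_{2}^{\sqrt{x}}\frac{\{x/t\}}{\log t}\,dt+\int_{\sqrt{x}}^{x}\frac{\{x/t\}}{\log t}\,dt .
\]
Since $0\le\{x/t\}\le1$, the first piece is at most $\int_{2}^{\sqrt x}\frac{dt}{\log t}\le\sqrt x/\log 2=O(\sqrt x)$, which is absorbed into the error term $O(x/\log^{k+1}x)$ for every fixed $k$. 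In the second piece the substitution $u=x/t$ gives
\[
\int_{\sqrt{x}}^{x}\frac{\{x/t\}}{\log t}\,dt=x\int_{1}^{\sqrt{x}}\frac{\{u\}}{u^{2}\,(\log x-\log u)}\,du ,
\]
and the point of stopping at $\sqrt x$ is that on this range $\log u/\log x\le\tfrac12$, so $\frac{1}{\log x-\log u}=\frac{1}{\log x}\sum_{j\ge0}\bigl(\tfrac{\log u}{\log x}\bigr)^{j}$ converges geometrically.

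Next, truncating that series at $j=k-1$, the remainder is $\le 2(\log u)^{k}/(\log x)^{k+1}$ for $u\le\sqrt x$, and after integration against $\{u\}/u^{2}$ it contributes $O_{k}\!\bigl(x/\log^{k+1}x\bigr)$. For each of the $k$ main terms I would enlarge the range of integration from $[1,\sqrt x]$ to $[1,\infty)$; since $\int_{\sqrt x}^{\infty}(\log u)^{j}u^{-2}\,du\ll(\log x)^{j}/\sqrt x$, this costs only $O(\sqrt x/\log x)$ per term. Collecting the pieces yields
\[
\text{li}_{f}(x)=\sum_{j=0}^{k-1}\frac{I_{j}\,x}{(\log x)^{j+1}}+O_{k}\!\left(\frac{x}{\log^{k+1}x}\right),\qquad I_{j}:=\int_{1}^{\infty}\frac{\{u\}(\log u)^{j}}{u^{2}}\,du .
\]

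It then remains to identify $I_{j}$ with $j!\,c_{j}$, and this is where the Stieltjes constants enter. Applying Abel summation (equivalently, Euler--Maclaurin) to $\sum_{n\le m}(\log n)^{j}/n$ with smooth part $\int_{1}^{m}(\log t)^{j}t^{-1}\,dt=(\log m)^{j+1}/(j+1)$, and using $\gamma_{j}=\lim_{m\to\infty}\bigl(\sum_{n\le m}(\log n)^{j}/n-(\log m)^{j+1}/(j+1)\bigr)$, one gets for $j\ge1$
\[
\gamma_{j}=\int_{1}^{\infty}\{t\}\,\frac{(\log t)^{j-1}(j-\log t)}{t^{2}}\,dt=j\,I_{j-1}-I_{j},
\]
together with the classical base case $I_{0}=\int_{1}^{\infty}\{t\}t^{-2}\,dt=1-\gamma_{0}$ (the extra $1$ being the $n=1$ term, which is absent when $j\ge1$). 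Solving the recursion $I_{j}=jI_{j-1}-\gamma_{j}$ from this base case gives $I_{j}=j!\bigl(1-\sum_{i=0}^{j}\gamma_{i}/i!\bigr)=j!\,c_{j}$, and substituting this into the previous display is exactly \eqref{eq:li f expansion}.

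The only genuinely delicate step is the decision to discard the range $t\le\sqrt x$: near $t=1$ (equivalently $u$ near $x$) the geometric expansion of $1/(\log x-\log u)$ breaks down, so the expansion has to be carried out only where $\log u$ stays comfortably below $\log x$; fortunately the discarded piece is only $O(\sqrt x)$ and costs nothing. Everything else — the tail bounds $\int_{\sqrt x}^{\infty}(\log u)^{j}u^{-2}\,du\ll(\log x)^{j}/\sqrt x$ and the recursion for $I_{j}$ — is routine.
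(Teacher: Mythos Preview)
Your proof is correct and follows the paper's analysis almost verbatim for the main asymptotic: the same truncation at $\sqrt{x}$, the same substitution $u=x/t$, the same geometric expansion of $(1-\log u/\log x)^{-1}$, and the same extension of the coefficient integrals to $[1,\infty)$.

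The one genuine difference is in how you identify $I_{j}=\int_{1}^{\infty}\{u\}(\log u)^{j}u^{-2}\,du$ with $j!\,c_{j}$. The paper packages all the $I_{j}$ into the generating function $\sum_{n}c_{n}z^{n}=\int_{1}^{\infty}\{u\}u^{z-2}\,du$, evaluates it as $(1-\zeta(1-z)-1/z)/(1-z)$ via the integral representation $\zeta(s)=\tfrac{1}{s-1}+1-s\int_{1}^{\infty}\{x\}x^{-s-1}\,dx$, and then reads off coefficients from the Laurent expansion of $\zeta$. Your route is more direct and more elementary: Abel summation on $\sum_{n\le m}(\log n)^{j}/n$ gives the recursion $\gamma_{j}=jI_{j-1}-I_{j}$ (with the base case $I_{0}=1-\gamma_{0}$ picking up the extra $1$ from the $n=1$ term), and telescoping yields $I_{j}/j!=1-\sum_{i\le j}\gamma_{i}/i!$. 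Both arguments are short; the generating-function version is slicker and delivers all coefficients at once, while yours avoids invoking any analytic continuation of $\zeta$.
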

Using the Taylor expansion for $e^{z}$ around $z=0$ along with the
above proposition, it follows that 
\[
\exp\left(-\text{li}_{f}(x)/x\right)=1+\frac{1-\gamma}{\log x}+O\left(\frac{1}{\log^{2}x}\right).
\]
Since 
\[
x^{\frac{1}{\sqrt{e}}\frac{1-\gamma}{\log x}}=e^{\frac{\gamma-1}{\sqrt{e}}}x^{\frac{1}{\sqrt{e}}},
\]
we are able to deduce equation \ref{eq:M(x) asymptotic} as a corollary
of theorem \ref{thm: Median prime factor precise}. Applying the same
approach with more terms, it follows that $M(x)$ has an asymptotic
expansion of the form 
\begin{equation}
M(x)=e^{\frac{\gamma-1}{\sqrt{e}}}x^{\frac{1}{\sqrt{e}}}\left(1+\frac{d_{1}}{\log x}+\cdots+\frac{d_{n}}{\log^{n}x}+O_{n}\left(\frac{1}{\log^{n+1}x}\right)\right),\label{eq:M(x) expansion}
\end{equation}
where the $d_{i}$ are computable constants. 

Our proof of theorem \ref{thm: Median prime factor precise} is elementary,
and uses an application of the hyperbola method. In section \ref{sec:The-Mean-of-Omega},
we use theorem \ref{thm: Median prime factor precise} to strengthen
a result of Diaconis \cite{Diaconis1976}, and prove that 
\begin{equation}
\sum_{n\leq x}\omega(n)=x\log\log x+B_{1}x-\text{li}_{f}(x)+O_{\epsilon}\left(xe^{-c\left(\log x\right)^{3/5-\epsilon}}\right).\label{eq:sum of omega(n) diaconis}
\end{equation}
From this, we can recover Diaconis' asymptotic expansion of $\sum_{n\leq x}\omega(n)$
by applying proposition \ref{prop: li_f expansion}. In section \ref{sec:Integers-Without-Large-Prime-Factors},
we use the work of DeBruijn \cite{DeBruijn1951} and Saias \cite{Saias1989}
on integers without large prime factors to give an alternate derivation
of theorem \ref{thm: Median prime factor precise}.

\section{The Main Theorem}

For each prime $p$ greater than $\sqrt{x}$, there is at most one
integer $n\leq x$ such that $p|n$. By the definition of the median
largest prime factor, exactly half of the integers in the interval
$\left[1,x\right]$ will be divisible by a prime $p>M(x)$, and since
$M(x)=x^{1/\sqrt{e}+o(1)}>\sqrt{x}$ there will be no double counting.
It then follows that 
\[
\frac{1}{2}x=\sum_{M(x)<p\leq x}\left[\frac{x}{p}\right]+O(1),
\]
where the $O(1)$ term arises since $x$ may not be an even integer.
We may split up this sum by writing the floor function as $\left[x\right]=x-\left\{ x\right\} $,
where $\left\{ x\right\} $ denotes the fractional part of $x$. Using
Mertens formula 
\begin{equation}
\sum_{p\leq x}\frac{1}{p}=\log\log x+B_{1}+O_{\epsilon}\left(e^{-c\left(\log x\right)^{3/5-\epsilon}}\right),\label{eq:Mertens Formula}
\end{equation}
where $B_{1}=\gamma-\sum_{p}\sum_{k\geq2}\frac{1}{kp^{k}}$, we see
that 
\begin{equation}
\sum_{M(x)<p\leq x}\left\{ \frac{x}{p}\right\} =x\left(\log\frac{\log x}{\log M(x)}-\frac{1}{2}\right)+O(1).\label{eq:M(x) functional equation}
\end{equation}
To understand the left hand side of the equation, we need to find
a precise asymptotic for the sum of the fractional parts $\left\{ x/p\right\} $.
The following proposition strengthens a result of De La Vall\'{e}e
Poussin's \cite{DeLaValleePoussinFractional} where he gave the asymptotic
$\sum_{p\leq x}\left\{ \frac{x}{p}\right\} \sim\frac{1-\gamma}{\log x}$. 
\begin{prop}
\label{prop:frac(x/p) proposition}We have that 
\[
\sum_{p\leq x}\left\{ \frac{x}{p}\right\} =\text{li}_{f}(x)+O_{\epsilon}\left(xe^{-c\left(\log x\right)^{3/5-\epsilon}}\right).
\]
\end{prop}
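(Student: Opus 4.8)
The plan is to write $\{x/p\}=x/p-\lfloor x/p\rfloor$ and to estimate the two pieces separately over the primes $p\in(\sqrt{x},x]$, discarding the primes $p\le\sqrt{x}$ at the outset: their contribution to the left side is $O(\pi(\sqrt{x}))=O(\sqrt{x})$, which is swallowed by the error term because $\sqrt{x}\le x\,e^{-c(\log x)^{3/5-\epsilon}}$ once $x$ is large. Writing $\text{li}_{f}(x)=\int_{2}^{\sqrt{x}}\frac{\{x/t\}}{\log t}\,dt+\int_{\sqrt{x}}^{x}\frac{\{x/t\}}{\log t}\,dt$, the first integral is trivially $O(\sqrt{x})$, so it suffices to prove $\sum_{\sqrt{x}<p\le x}\{x/p\}=\int_{\sqrt{x}}^{x}\frac{\{x/t\}}{\log t}\,dt+O_{\epsilon}(x\,e^{-c(\log x)^{3/5-\epsilon}})$. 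The reason for truncating at $\sqrt{x}$ is that every prime-counting error term invoked below is then evaluated at an argument $u\ge\sqrt{x}$, where $(\log u)^{3/5-\epsilon}$ still dominates $(\log x)^{3/5-\epsilon}$ up to a constant, and the number of such terms stays under control; if one ran the identities over all $p\le x$, error terms $\pi(u)-\text{li}(u)$ with $u$ bounded would appear and the saving would be lost entirely.

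For the $x/p$ piece, Mertens' formula (2.5) applied at $x$ and at $\sqrt{x}$ gives $x\sum_{\sqrt{x}<p\le x}\frac{1}{p}=x\log 2+O_{\epsilon}(x\,e^{-c(\log x)^{3/5-\epsilon}})$, which matches $x\int_{\sqrt{x}}^{x}\frac{dt}{t\log t}=x\log 2$ up to the allowed error. For the $\lfloor x/p\rfloor$ piece I use the hyperbola identity: counting the lattice points $(p,k)$ with $p$ prime, $\sqrt{x}<p\le x$, and $pk\le x$ (so necessarily $1\le k<\sqrt{x}$) by summing over $k$ first gives $\sum_{\sqrt{x}<p\le x}\lfloor x/p\rfloor=\sum_{1\le k<\sqrt{x}}(\pi(x/k)-\pi(\sqrt{x}))$, while the same region weighted by $1/\log t$ gives $\int_{\sqrt{x}}^{x}\frac{\lfloor x/t\rfloor}{\log t}\,dt=\sum_{1\le k<\sqrt{x}}(\text{li}(x/k)-\text{li}(\sqrt{x}))$. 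Subtracting these and writing $E(u)=\pi(u)-\text{li}(u)$, the difference of the two $\lfloor\cdot\rfloor$-quantities is exactly $\sum_{1\le k<\sqrt{x}}E(x/k)-N\,E(\sqrt{x})$, where $N$ is the number of integers $k$ in $[1,\sqrt{x})$, so $N\le\sqrt{x}$. Combining the two pieces, $\sum_{\sqrt{x}<p\le x}\{x/p\}-\int_{\sqrt{x}}^{x}\frac{\{x/t\}}{\log t}\,dt=O_{\epsilon}(x\,e^{-c(\log x)^{3/5-\epsilon}})-\sum_{1\le k<\sqrt{x}}E(x/k)+N\,E(\sqrt{x})$.

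The remaining estimate is where the prime number theorem enters, in the form $E(u)=O_{\epsilon}(u\,e^{-c(\log u)^{3/5-\epsilon}})$ coming from the same Vinogradov--Korobov zero-free region that underlies (2.5). Since $x/k\ge\sqrt{x}$ for $1\le k<\sqrt{x}$, we have $\sum_{1\le k<\sqrt{x}}|E(x/k)|\ll x\,e^{-c'(\log x)^{3/5-\epsilon}}\sum_{1\le k<\sqrt{x}}\frac{1}{k}\ll x\log x\cdot e^{-c'(\log x)^{3/5-\epsilon}}$, and $N\,|E(\sqrt{x})|\ll\sqrt{x}\cdot\sqrt{x}\,e^{-c'(\log x)^{3/5-\epsilon}}=x\,e^{-c'(\log x)^{3/5-\epsilon}}$; in both, the extra $\log x$ and the constant factor lost in replacing $\log x$ by $\log\sqrt{x}$ in the exponent are absorbed into a slightly smaller constant, leaving $O_{\epsilon}(x\,e^{-c(\log x)^{3/5-\epsilon}})$. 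Collecting the discarded $O(\sqrt{x})$ terms then gives the proposition. The whole argument is short; the only point needing care is this last bit of bookkeeping — arranging the truncation so that $\sqrt{x}$, the harmonic-sum factor $\log x$, and the contraction of the $\log$-power are all absorbed by one constant $c=c(\epsilon)$ — and I expect that to be the main (mild) obstacle, the arithmetic input being entirely standard.
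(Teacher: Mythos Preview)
Your argument is correct and follows essentially the same route as the paper: truncate at $\sqrt{x}$, apply the hyperbola identity to $\sum_{\sqrt{x}<p\le x}\lfloor x/p\rfloor$, replace $\pi$ by $\text{li}$ via the Vinogradov--Korobov prime number theorem, and handle the $x/p$ contribution by Mertens' formula. The only difference is organizational: you split both the sum and the target integral as $u-\lfloor u\rfloor$ at the outset and match the two halves separately, whereas the paper first shows $\sum_{\sqrt{x}<p\le x}\lfloor x/p\rfloor=\int_{\sqrt{x}}^{x}\frac{\lfloor x/t\rfloor}{\log t}\,dt+O(\cdot)$ and then splits the integrand; the content is identical.
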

\begin{proof}
Let $1<B\leq x$ be some integer, and fix $\epsilon>0$. Splitting
into intervals and rearranging we have that 
\begin{eqnarray}
\sum_{\frac{x}{B}<p\leq x}\left[\frac{x}{p}\right] & = & \sum_{n\leq B-1}n\left(\sum_{\frac{x}{n+1}<p\leq\frac{x}{n}}1\right)=\pi(x)+\pi\left(\frac{x}{2}\right)+\cdots+\pi\left(\frac{x}{B-1}\right)-(B-1)\pi\left(\frac{x}{B}\right)\nonumber \\
 & = & \sum_{n\leq B-1}\left(\pi\left(\frac{x}{n}\right)-\pi\left(\frac{x}{B}\right)\right).\label{eq:Hyperbola method}
\end{eqnarray}
By the prime number theorem this is

\begin{eqnarray*}
 & = & \sum_{n\leq B-1}\int_{\frac{x}{B}}^{\frac{x}{n}}\frac{1}{\log t}dt+O\left(\sum_{n\leq B-1}\left(\frac{x}{n}e^{-c\sqrt{\log\frac{x}{n}}}\right)\right)\\
 & = & \int_{\frac{x}{B}}^{x}\frac{\left[x/t\right]}{\log t}dt+O\left(xe^{-c\left(\log\frac{x}{B}\right)^{3/5-\epsilon}}\log B\right).
\end{eqnarray*}
Using the fact that $\left[x\right]=x-\left\{ x\right\} $, the main
term is 
\[
\int_{\frac{x}{B}}^{x}\frac{\left[x/t\right]}{\log t}dt=x\left(\log\log x-\log\log\left(\frac{x}{B}\right)\right)-\int_{\frac{x}{B}}^{x}\frac{\{x/t\}}{\log t}dt,
\]
and hence since $\sum_{\frac{x}{B}\leq p\leq x}\frac{1}{p}=\log\log x-\log\log\left(\frac{x}{B}\right)+O\left(e^{-c\left(\log x\right)^{\frac{3}{5}-\epsilon}}\right)$
by \ref{eq:Mertens Formula}, we have that 
\[
\sum_{\frac{x}{B}<p\leq x}\left\{ \frac{x}{p}\right\} =\int_{\frac{x}{B}}^{x}\frac{\{x/t\}}{\log t}dt+O\left(xe^{-c\left(\log\frac{x}{B}\right)^{3/5-\epsilon}}\log B\right).
\]
The proposition follows by choosing $B=\sqrt{x}$ and noting that
we can extend the sum and integral to start at $2$ as $\int_{2}^{\sqrt{x}}\frac{\{x/t\}}{\log t}dt=O\left(\frac{\sqrt{x}}{\log x}\right)$
and $\sum_{p\leq\sqrt{x}}\left\{ \frac{x}{p}\right\} =O\left(\frac{\sqrt{x}}{\log x}\right).$
\end{proof}
Combining proposition \ref{prop:frac(x/p) proposition} with equation
\ref{eq:M(x) functional equation}, we have 
\begin{equation}
\text{li}_{f}(x)=-x\left(\log\frac{\log M(x)}{\log x}+\frac{1}{2}\right)+O_{\epsilon}\left(xe^{-c\left(\log x\right)^{3/5-\epsilon}}\right),\label{eq:M(x) functional equation with li_f(x)}
\end{equation}
since 
\[
\sum_{M(x)<p\leq x}\left\{ \frac{x}{p}\right\} =\sum_{p\leq x}\left\{ \frac{x}{p}\right\} +O\left(M(x)\right)=\text{li}_{f}(x)+O_{\epsilon}\left(xe^{-c\left(\log x\right)^{3/5-\epsilon}}\right).
\]
Rearranging the equation, we find that 
\[
\frac{\log M(x)}{\log x}=\exp\left(-\frac{1}{2}-\frac{\text{li}_{f}(x)}{x}+O_{\epsilon}\left(e^{-c\left(\log x\right)^{3/5-\epsilon}}\right)\right),
\]
and we are able to turn the error term into an additive factor since
\begin{equation}
\exp\left(O_{\epsilon}\left(e^{-c\left(\log x\right)^{3/5-\epsilon}}\right)\right)=1+O_{\epsilon}\left(e^{-c\left(\log x\right)^{3/5-\epsilon}}\right).\label{eq:error multiplicative to additive}
\end{equation}
It follows that
\[
M(x)=x^{\frac{1}{\sqrt{e}}\exp\left(-\text{li}_{f}(x)/x\right)+O_{\epsilon}\left(\exp\left(-c\left(\log x\right)^{3/5-\epsilon}\right)\right)},
\]
and by using \ref{eq:error multiplicative to additive} again, we
obtain equation \ref{eq:M(x) full asymptotic}, proving theorem \ref{thm: Median prime factor precise}.

\subsection{The Function $\text{li}_{f}(x)$ }

To prove proposition \ref{eq:li f expansion}, we begin by truncating
the interval of integration to obtain 
\begin{equation}
\text{li}_{f}(x)=\int_{\sqrt{x}}^{x}\frac{\{x/t\}}{\log t}dt+O\left(\frac{\sqrt{x}}{\log x}\right).\label{eq:li f proof equation}
\end{equation}
Substituting $u=x/t$, we may write 
\[
\int_{\sqrt{x}}^{x}\frac{\{x/t\}}{\log t}dt=x\int_{1}^{\sqrt{x}}\frac{\{u\}}{u^{2}\log\left(\frac{x}{u}\right)}du=\frac{x}{\log x}\int_{1}^{\sqrt{x}}\frac{\{u\}}{u^{2}}\left(1-\frac{\log u}{\log x}\right)^{-1}du.
\]
Expanding the geometric series
\[
\left(1-\frac{\log u}{\log x}\right)^{-1}=1+\frac{\log u}{\log x}+\cdots+\left(\frac{\log u}{\log x}\right)^{k-1}+\left(\frac{\log u}{\log x}\right)^{k}\left(1-\frac{\log u}{\log x}\right)^{-1},
\]
we see that 
\begin{equation}
\int_{\sqrt{x}}^{x}\frac{\{x/t\}}{\log t}dt=\frac{x}{\log x}\sum_{n=0}^{k-1}\frac{\int_{1}^{\sqrt{x}}\frac{\{u\}}{u^{2}}\left(\log u\right)^{n}du}{\left(\log x\right)^{n}}+\frac{x}{\left(\log x\right)^{k+1}}\int_{1}^{\sqrt{x}}\frac{\{u\}}{u^{2}}\left(\log u\right)^{k}\left(1-\frac{\log u}{\log x}\right)^{-1}du.\label{eq:li_f giant equation}
\end{equation}
The last term contributes an error of the form $O_{k}\left(\frac{x}{\log^{k+1}x}\right)$,
and since we may bound the integral 
\[
\int_{\sqrt{x}}^{\infty}\frac{\{u\}}{u^{2}}\left(\log u\right)^{n}du\leq\int_{\sqrt{x}}^{\infty}\frac{\left(\log u\right)^{n}}{u^{2}}du=O\left(\frac{\left(\log x\right)^{n}}{\sqrt{x}}\right),
\]
it follows that by \ref{eq:li_f giant equation} and \ref{eq:li f proof equation}
we have 
\begin{equation}
\text{li}_{f}(x)=\frac{x}{\log x}\sum_{n=0}^{k-1}\frac{\int_{1}^{\infty}\frac{\{u\}}{u^{2}}\left(\log u\right)^{n}du}{\left(\log x\right)^{n}}+O_{k}\left(\frac{x}{\log^{k+1}x}\right).\label{eq: int proof equation}
\end{equation}
To evaluate the constants explicitly, we will make use of the Laurent
expansion of $\zeta(s)$ which is given by
\begin{equation}
\zeta(s)=\frac{1}{s-1}+\sum_{n=0}^{\infty}\frac{(-1)^{n}}{n!}\gamma_{n}(s-1)^{n},\label{eq:laurent expansion}
\end{equation}
where the $\gamma_{n}$ are the Stieltjes Constants, and $\gamma_{0}$
is the Euler-Mascheroni constant. We will also make use of the identity
\begin{equation}
\zeta(s)=\frac{1}{s-1}+1-s\int_{1}^{\infty}\{x\}x^{-s-1}dx,\label{eq:zeta for s>0}
\end{equation}
which holds for for $s\neq1$, $\text{Re}(s)>0$ \cite{MontVaughn2007}.
Letting 
\begin{equation}
c_{n}=\frac{1}{n!}\int_{1}^{\infty}\frac{\{u\}}{u^{2}}\left(\log u\right)^{n}du,\label{eq:c_n definition}
\end{equation}
we have the following lemma:
\begin{lem}
\label{lem:integral as stieltjes constants}For any integer $n\geq0$,
\[
c_{n}=1-\sum_{k=0}^{n}\frac{1}{k!}\gamma_{n}.
\]
\end{lem}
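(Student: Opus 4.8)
The plan is to encode $c_{n}$ as a Taylor coefficient, at the point $s=1$, of the integral
\[
F(s)=\int_{1}^{\infty}\{u\}u^{-s-1}\,du,
\]
and then to read those coefficients off from the identity \eqref{eq:zeta for s>0} together with the Laurent expansion \eqref{eq:laurent expansion}. Since $|\{u\}u^{-s-1}|\le u^{-\text{Re}(s)-1}$, the integral converges absolutely and locally uniformly on the half-plane $\text{Re}(s)>0$, so $F$ is holomorphic there and one may differentiate under the integral sign to obtain $F^{(n)}(s)=(-1)^{n}\int_{1}^{\infty}\{u\}(\log u)^{n}u^{-s-1}\,du$. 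Evaluating at $s=1$ and comparing with \eqref{eq:c_n definition} gives $F^{(n)}(1)=(-1)^{n}n!\,c_{n}$; equivalently, the coefficient of $(s-1)^{n}$ in the Taylor expansion of $F$ about $s=1$ is $(-1)^{n}c_{n}$. (In passing this shows $c_{n}$ is finite, since $\int_{1}^{\infty}(\log u)^{n}u^{-2}\,du=n!$.)

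Next I would identify the Taylor expansion of $sF(s)$. Rearranging \eqref{eq:zeta for s>0} gives $sF(s)=\frac{1}{s-1}+1-\zeta(s)$, and substituting \eqref{eq:laurent expansion} the simple pole cancels, leaving, for $|s-1|$ small,
\[
sF(s)=1-\sum_{m=0}^{\infty}\frac{(-1)^{m}}{m!}\gamma_{m}(s-1)^{m}=:\sum_{m=0}^{\infty}a_{m}(s-1)^{m},
\]
so that $a_{0}=1-\gamma_{0}$ and $a_{m}=-\frac{(-1)^{m}}{m!}\gamma_{m}$ for $m\ge 1$.

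Finally I would divide by $s$ and match coefficients. Writing $\tfrac1s=\sum_{j\ge 0}(-1)^{j}(s-1)^{j}$ for $|s-1|<1$ and forming the Cauchy product with the series above, the coefficient of $(s-1)^{n}$ in $F(s)=\tfrac1s\cdot sF(s)$ is $\sum_{k=0}^{n}(-1)^{n-k}a_{k}$. Equating this with $(-1)^{n}c_{n}$ from the first paragraph and multiplying by $(-1)^{n}$ yields $c_{n}=\sum_{k=0}^{n}(-1)^{k}a_{k}$; since $(-1)^{0}a_{0}=1-\gamma_{0}$ and $(-1)^{k}a_{k}=-\gamma_{k}/k!$ for $k\ge 1$, the signs collapse and $c_{n}=1-\gamma_{0}-\sum_{k=1}^{n}\gamma_{k}/k!=1-\sum_{k=0}^{n}\frac{1}{k!}\gamma_{k}$, as claimed. (Alternatively one can skip the Cauchy product: Leibniz's rule applied to $sF(s)$ at $s=1$ gives $F^{(n)}(1)+nF^{(n-1)}(1)=n!\,a_{n}$, which by the first paragraph is the telescoping recursion $c_{n}-c_{n-1}=-\gamma_{n}/n!$ for $n\ge 1$ with base case $c_{0}=F(1)=a_{0}=1-\gamma_{0}$.) The only delicate point is the sign bookkeeping in the Cauchy product (or, in the alternative route, verifying the base case and the recursion); the analytic ingredients — convergence of the integral, differentiating under the integral sign, and cancellation of the pole at $s=1$ — are all routine.
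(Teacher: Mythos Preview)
Your proof is correct and is essentially the same as the paper's: the paper forms the generating series $\sum_{n\ge 0}c_{n}z^{n}=\int_{1}^{\infty}\{u\}u^{z-2}\,du$, identifies it via \eqref{eq:zeta for s>0} as $\frac{1-\zeta(1-z)-1/z}{1-z}$, and then reads off coefficients from the Cauchy product $\bigl(\sum_{m\ge 0}z^{m}\bigr)\bigl(1-\sum_{n\ge 0}\gamma_{n}z^{n}/n!\bigr)$. This is exactly your argument after the change of variable $z=1-s$, which absorbs all the $(-1)^{k}$ bookkeeping.
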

\begin{proof}
Consider the generating series 
\[
\sum_{n=0}^{\infty}c_{n}z^{n}=\int_{1}^{\infty}\frac{\{u\}}{u^{2}}e^{z\log u}du=\int_{1}^{\infty}\{u\}u^{z-2}du.
\]
By \ref{eq:zeta for s>0}, this equals $\frac{1-\zeta(1-z)-1/z}{1-z}$,
and so from equation \ref{eq:laurent expansion} we have 
\[
\sum_{n=0}^{\infty}c_{n}z^{n}=\left(\sum_{m\geq0}z^{m}\right)\left(1-\sum_{n\geq0}\frac{\gamma_{n}}{n!}z^{n}\right),
\]
and the result follows upon comparing coefficients.
\end{proof}

\section{The Mean of $\omega(n)$ and $\Omega(n)$ \label{sec:The-Mean-of-Omega}}

Using proposition \ref{prop:frac(x/p) proposition}, we are able to
provide a short proof of the asymptotic expansion of $\sum_{n\leq x}\omega(n)$
given in \cite{Diaconis1976}, where $\omega(n)$ is the number of
distinct prime divisors function. Since $\omega(n)=\sum_{p|n}1$,
rearranging the orders of summation implies that 
\[
\sum_{n\leq x}\omega(n)=\sum_{n\leq x}\sum_{p|n}1=\sum_{p\leq x}\left[\frac{x}{p}\right]=x\sum_{p\leq x}\frac{1}{p}-\sum_{p\leq x}\left\{ \frac{x}{p}\right\} .
\]
By \ref{eq:Mertens Formula} and proposition \ref{prop:frac(x/p) proposition},
we deduce equation \ref{eq:sum of omega(n) diaconis} which states
that 
\[
\sum_{n\leq x}\omega(n)=x\log\log x+B_{1}x-\text{li}_{f}(x)+O_{\epsilon}\left(xe^{-c\left(\log x\right)^{3/5-\epsilon}}\right).
\]
Combining the previous equation with proposition \ref{prop: li_f expansion}
yields Diaconis' \cite{Diaconis1976} expansion 
\begin{equation}
\sum_{n\leq x}\omega(n)=x\log\log x+B_{1}x-\frac{c_{0}x}{\log x}+\cdots+\frac{c_{k-1}(k-1)!x}{\log^{k}x}+O\left(\frac{x}{\log^{k+1}x}\right),\label{eq:Diaconis expansion}
\end{equation}
with $c_{n}$ given explicitely as $c_{n}=1-\sum_{k=0}^{n}\frac{1}{k!}\gamma_{k}$.
We may derive a similar expansion for $\Omega(n)$, the number of
distinct prime factors counted with multiplicity. Taking into account
the higher prime powers, we see that 
\[
\sum_{n\leq x}\Omega(n)-\omega(n)=x\sum_{p}\frac{1}{p(p-1)}+O\left(\sqrt{x}\log x\right),
\]
and so 
\begin{equation}
\sum_{n\leq x}\Omega(n)=x\log\log x+B_{2}x-\text{li}_{f}(x)+O\left(xe^{-c\left(\log x\right)^{3/5-\epsilon}}\right)\label{eq:Omega(n) asymptotic}
\end{equation}
where $B_{2}=B_{1}+\sum_{p}\frac{1}{p(p-1)}.$

\section{Integers Without Large Prime Factors \label{sec:Integers-Without-Large-Prime-Factors}}

In this section, we deduce the main result in a different way using
the work of DeBruijn \cite{DeBruijn1951} and Saias \cite{Saias1989}
on integers without large prime factors. Let $\psi(x,y)$ denote the
number of integers $n$ with $1\leq n\leq x$, all of whose prime
factors are $\leq y$. Then the median largest prime factor, $M(x)$,
of the integers in the interval $[1,x]$ satisfies 
\[
\psi\left(x,M(x)\right)=\frac{1}{2}x+O(1).
\]
In \cite{DeBruijn1951}, De Bruijn showed that 
\[
\psi(x,y)=\Lambda(x,y)+O_{\epsilon}\left(xe^{-c\left(\log x\right)^{3/5-\epsilon}}\right),
\]
where 
\[
\Lambda(x,y)=x\int_{0}^{x}\rho\left(\frac{\log x-\log t}{\log y}\right)d\frac{\left[t\right]}{t},
\]
and $\rho(u)$ denotes the Dickmann De Bruijn rho function. It follows
that we are looking for $y$ such that 
\begin{equation}
\Lambda(x,y)=\frac{1}{2}x+O_{\epsilon}\left(xe^{-c\left(\log x\right)^{3/5-\epsilon}}\right).\label{eq:Lambda(x,y) equation}
\end{equation}
Examining $\Lambda(x,y)$ more closely, integration by parts yields
\[
\int_{0}^{x}\rho\left(\frac{\log x-\log t}{\log y}\right)d\frac{\left[t\right]}{t}=1+\frac{1}{\log y}\int_{0}^{x}\frac{\left[t\right]}{t^{2}}\rho^{'}\left(\frac{\log x-\log t}{\log y}\right)dt.
\]
Substituting $s=\frac{x}{t}$, and using the fact that $\rho^{'}(u)=0$
when $0<u<1$, we have 
\[
\Lambda(x,y)=x+\frac{1}{\log y}\int_{y}^{x}\left[x/s\right]\rho^{'}\left(\frac{\log s}{\log y}\right)ds.
\]
In our case, since $x>y>\sqrt{x}$, we are on the interval $1<u<2$,
and on this range $\rho(u)=1-\log u$ so that $\rho'(u)=-\frac{1}{u}$.
Thus we have that for $x\geq y>\sqrt{x}$, 
\[
\Lambda(x,y)=1-\int_{y}^{x}\frac{\left[x/s\right]}{\log s}ds,
\]
and by splitting up the floor function and recalling the definition
of $\text{li}_{f}(x)$, it follows that 
\begin{eqnarray}
\Lambda(x,y) & = & x-x\int_{y}^{x}\frac{1}{s\log s}ds+\int_{y}^{x}\frac{\left\{ x/s\right\} }{\log s}ds\nonumber \\
 & = & x\left(1-\log\left(\frac{\log x}{\log y}\right)\right)+\text{li}_{f}(x)+O\left(y\right).\label{eq:lambda(x,y) and lif equivalence}
\end{eqnarray}
With \ref{eq:lambda(x,y) and lif equivalence} in hand, solving equation
\ref{eq:Lambda(x,y) equation} for $M(x)$ becomes identical to solving
equation \ref{eq:M(x) functional equation with li_f(x)}, and so this
gives a way to deduce theorem \ref{thm: Median prime factor precise}
from De Bruijn's work. To obtain proposition \ref{prop: li_f expansion},
we use the expansion for $\Lambda(x,y)$ given in Saias' paper \cite{Saias1989}.
Suppose that $x^{\frac{1}{u}}=y$, $u\leq\left(\log y\right)^{\frac{3}{5}-\epsilon}$,
and that $u\in\cup_{1\leq k\leq n}\left(k+\epsilon,k+1\right)\cup\left(n+1,\infty\right)$,
so that $u$ is not too close to an integer. Then then we have the
expansion 
\begin{equation}
\Lambda(x,y)=x\sum_{k=0}^{n}a_{k}\frac{\rho^{(k)}(u)}{\left(\log y\right)^{k}}+O_{n,\epsilon}\left(x\frac{\rho^{(n+1)}(u)}{\left(\log y\right)^{n+1}}\right),\label{eq:Eric Saias result}
\end{equation}
where 
\[
a_{0}=1,\ \ \ \ a_{k}=\frac{(-1)^{k}}{(k-1)!}\int_{1}^{\infty}\frac{\left\{ t\right\} }{t^{2}}\left(\log t\right)^{k-1}dt.
\]
In our case, $1<u<2$, and on this range $\rho(u)=1-\log u$ so that
$\rho'(u)=-\frac{1}{u}$ and $\rho^{\left(k\right)}(u)=\frac{(k-1)!(-1)^{k}}{u^{k}}.$
We note that, by \ref{lem:integral as stieltjes constants}, for $k\geq1$
\[
a_{k}=(-1)^{k}c_{k-1}=(-1)^{k}\left(1-\sum_{j=0}^{k-1}\frac{1}{j!}\gamma_{j}\right).
\]
Since $\frac{1}{\left(\log y\right)^{k}}=\frac{u^{k}}{\left(\log x\right)^{k}}$,
equation \ref{eq:Eric Saias result} becomes 
\begin{equation}
\Lambda(x,y)=x\rho(u)+\sum_{k=1}^{n}\frac{(k-1)!c_{k-1}}{\left(\log x\right)^{k}}+O_{n}\left(\frac{x}{\left(\log x\right)^{n+1}}\right).\label{eq:Eric Saias rewritten nicely}
\end{equation}
Combining equation \ref{eq:Eric Saias rewritten nicely} with the
fact that $\rho(u)=1-\log\left(\frac{\log x}{\log y}\right)$ when
$y>\sqrt{x}$, we are able to obtain the expansion in proposition
\ref{prop: li_f expansion} from equation \ref{eq:lambda(x,y) and lif equivalence}.
We note that this implies that equations \ref{eq:sum of omega(n) diaconis}
and \ref{eq:Diaconis expansion} on the mean of $\omega(n)$ follow
from the work of De Bruijn and Saias on integers without large prime
factors.

\specialsection*{Acknowledgments}

I would like to thank Qiaochu Yuan for providing the proof of Lemma
\ref{lem:integral as stieltjes constants} on Math Overflow. I would
also like to thank Andrew Granville for his insightful comments, and
Kevin Ford for his helpful advice. I am indebted to Greg Martin for
both his mathematical suggestions and his encouragement, as this project
would not have started or progressed otherwise.

\bibliographystyle{plain}

\end{document}